\documentclass[12pt,reqno]{amsart}

\usepackage{preamble_paper}
\usepackage{preamble_general}
\usepackage{preamble_math}

\begin{document}

\title{A note on the shortest law for the symmetric group}

\author{Adrian Beker}
\address{University of Zagreb, Faculty of Science, Department of Mathematics, Zagreb, Croatia}
\email{Adrian.Beker@math.hr}

\author{Luka Mili\'{c}evi\'{c}}
\address{Mathematical Institute of the Serbian Academy of Sciences and Arts, Belgrade, Serbia}
\email{Luka.Milicevic@turing.mi.sanu.ac.rs}

\author{Rudi Mrazovi\'{c}}
\address{University of Zagreb, Faculty of Science, Department of Mathematics, Zagreb, Croatia}
\email{Rudi.Mrazovic@math.hr}

\begin{abstract}
    Let $\alpha(n)$ denote the length of the shortest non-trivial two-variable law for the symmetric group $S_n$.
    Buskin's quantitative subgroup-separability argument gives the classical lower bound $\alpha(n)\geq 2n-O(1)$.
    In this short note we give an improvement by proving that $\alpha(n)\geq \frac52 n-O(1)$.
\end{abstract}

\maketitle

\section{Introduction}

Let $F_2=\langle a,b\rangle$ be the free group on two generators, whose elements are considered as reduced words over the alphabet $\cA=\{a,a^{-1},b,b^{-1}\}$.
A non-trivial word $w\in F_2$ is a \emph{law} for a group $G$ if
\[
  w(g,h)=1
  \qquad\text{for every }g,h\in G.
\]
We write $|w|$ for the length of a word $w$ and define 
\[
    \alpha(n) \vcentcolon= \min\bigl\{|w| : w \text{ is a law for the symmetric group $S_n$}\bigr\}.
\]

The problem of estimating $\alpha(n)$ is closely related to quantitative subgroup separability in free groups.
Buskin proved \cite{Buskin} that every non-trivial word $w\in F_2$ can be omitted by a subgroup of index at most $|w|/2+2$; this implies $\alpha(n)\geq 2n-3$.
In the opposite direction, Kozma and Thom \cite{KozmaThom},
using the quasipolynomial diameter bound of Helfgott and Seress \cite{HelfgottSeress} for Cayley graphs of $S_n$,
proved
\[
    \alpha(n)
    \leq
    \exp\bigl(O((\log n)^4\log\log n)\bigr).
\]
Thus even the superlinearity of $\alpha(n)$ is an open problem.

As mentioned above, 
Buskin phrased his result in terms of the divisibility index,
which is defined for $1\neq w\in F_2$ as
\[
  D_{F_2}(w)
  \vcentcolon=\min\bigl\{[F_2:H]: H\leq F_2,\ w\notin H\bigr\}.
\]
The standard coset-action argument gives
\begin{equation}
    \label{eq:law-divisibility}
    w\text{ is a law for }S_n
    \quad\Longleftrightarrow\quad
    D_{F_2}(w)>n.
\end{equation}
Indeed, a subgroup $H$ of index at most $n$ yields a coset action in which $w$ moves the coset $H$ precisely when $w\notin H$. 
Conversely, if a pair $(\sigma, \tau)$ of permutations in $S_n$ witnesses that $w$ is not a law, considering the homomorphism $F_2 \to S_n$ mapping $v \mapsto v(\sigma, \tau)$, the preimage of the stabilizer of a point moved by $w(\sigma, \tau)$ has index at most $n$ and does not contain $w$.

Our main result is the following very modest improvement of Buskin's bound. Equivalently,
by \eqref{eq:law-divisibility}, we have a new lower bound for the length of the shortest law for $S_n$.

\begin{theorem}
    \label{t:main-divisibility}
    For every non-trivial word $w\in F_2$,
    \[
        D_{F_2}(w)\leq \frac25|w|+O(1).
    \]
    Equivalently, for every $n \in \N$,
    \[
        \alpha(n)\geq \frac52 n-O(1).
    \]
\end{theorem}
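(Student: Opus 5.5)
We need to show that every non-trivial $w\in F_2$ is not a law for $S_n$ whenever $|w|\le \frac52 n - C$. Equivalently, for such $w$ we must produce a finite partial action of $F_2$ that moves some point under $w$. The natural framework is Stallings-type folded graphs. Write $w$ in cyclically reduced form, which we may assume after conjugation since this does not increase the length and $D_{F_2}$ is conjugation invariant. Read $w$ as a path labelled by letters of $\cA$ in a directed graph whose vertices are the points of $\{1,\dots,n\}$. Each $a$-edge (resp.\ $b$-edge) prescribes a value of a partial injection $\sigma$ (resp.\ $\tau$). Any pair of partial injections extends to permutations. So $w$ is not a law as soon as we can choose the path vertices $v_0,\dots,v_{|w|}$ with two properties: the prescribed $a$- and $b$-edges are consistent partial injections, and $v_{|w|}\neq v_0$.

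Buskin's bound corresponds to the following construction. Decompose $w$ into maximal syllables $a^{k_1}b^{l_1}\cdots$. We may assume the number of $a$-letters is at most $|w|/2$, after swapping $a$ and $b$ if needed. Each $b$-syllable is realised as a fixed point of $\tau$, i.e.\ a single vertex, which is possible by making $\tau$ act on a vertex as a loop. Each $a$-syllable is realised by a segment of a single long $a$-path. This uses about $|w|/2$ vertices.

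To gain the factor $\frac45$ we must save a further fifth of the letters. The plan is a case split on the syllable structure:
\begin{itemize}
\item[(i)] Long syllables can be compressed using cycles of $\sigma$ of suitable length. A syllable $a^k$ only needs a cycle of length not dividing $k$, or a path overlapping a shorter cycle. Several long syllables can then share one cycle, in the spirit of choosing a prime modulus $p$ that fails to divide some relevant combination of exponents.
\item[(ii)] Short syllables, of length $1$ or $2$, make the word oscillate between $a$ and $b$ frequently. There both generators can be realised by making $\tau$ act nontrivially. We reuse the same vertices for $a$-steps and $b$-steps, so that a path of length $m$ consumes roughly $2m/5$ new vertices rather than $m/2$. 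For instance, alternating pieces such as $abab\ldots$ can be placed on a small configuration where $\sigma$ and $\tau$ both act along a common path.
\end{itemize}
Concretely, I would assign weights to syllables by length, and for each syllable type exhibit a local gadget costing at most $\frac25$ of its length in vertices, up to $O(1)$ boundary effects. Two constraints must hold when concatenating gadgets. First, consistency: each vertex has at most one outgoing and one incoming edge per letter. Second, endpoint separation: $v_{|w|}\ne v_0$.

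The main obstacle is the non-closing condition. Reusing vertices aggressively risks forcing the walk to return to its start, or forcing an inconsistency where a later letter wants an edge already occupied. To handle this, I would keep a small amount of freedom, $O(1)$ spare vertices, at the final stage. I would argue as follows: if every choice of the last gadget closed the path, then $w$ would have to be a proper power, or conjugate into a specific form such as a commutator of short words. That form can be handled separately by a direct construction. Alternatively, I would try a probabilistic averaging: randomly choose among two or three gadget variants at each step, and show that the expected number of collisions is small while the endpoint is not deterministic. Then some choice yields $w(\sigma,\tau)\ne1$ in $S_n$ with $n=\frac25|w|+O(1)$, which via \eqref{eq:law-divisibility} gives the theorem.
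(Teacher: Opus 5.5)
Your proposal has a genuine gap. It is a plan in which both essential ingredients are asserted rather than supplied: local gadgets achieving the ratio $\frac25$, and a mechanism that forces the final walk to be non-closed.

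\textbf{Non-closure.} You correctly flag this as the main obstacle, but neither remedy you offer is an argument. The claim that if every choice of the last gadget closes the walk then $w$ is a proper power or a commutator of short words is an unproved classification with nothing behind it. The probabilistic averaging is never set up.

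\textbf{Gadgets.} The gadget claims are equally unsupported. In (ii), letting $\sigma$ and $\tau$ act along a common path spends a new vertex on every forward step. As soon as inverse letters occur, or the path is closed into a cycle, the walk can backtrack to its start, e.g.\ for $ab^{-1}$ or $aba^{-1}b^{-1}$.

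\textbf{Baseline.} Your version of Buskin's construction is not his argument. If every $b$-syllable sits at a fixed point of $\tau$ and the $a$-syllables run along one $a$-path, the endpoint depends only on the $a$-exponent sum. So every word with zero $a$-exponent sum, such as $aba^{-1}b^{-1}$, gives a closed walk.

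\textbf{The missing idea.} What you lack is the accounting the paper uses to control the endpoint. Let $\eta$ be the number of edges traversed exactly once, and set $\Phi=|w'|+\eta$.
\begin{enumerate}
\item If a closed $w$-walk traverses some edge exactly once, subdividing that edge by one new vertex (carrying a loop of the other label) makes the walk non-closed (Lemma~\ref{l:edge-splitting}).
\item $\Phi$ does not decrease when a prefix is extended (Lemma~\ref{l:monotonicity}).
\item Hence it suffices to realise a \emph{prefix} of $w$ by a good triple with $\Phi>|w|$.
\item Good triples glue with additive cost and value (Lemma~\ref{l:gluing}).
\end{enumerate}
So the vertex count is governed by the efficiency $\Phi/\mathrm{cost}$, not by cost per letter. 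Buskin's factor $\frac12$ is exactly efficiency $2$ from the naive gadget, in which every edge is traversed once. The paper's $\frac25$ comes from gadgets of efficiency at least $\frac52$ for chunks $uC^{\pm d}$ with $|u|\le 4$ and total length at least $5$. These are built from least-non-divisor cycles when $d\ge 7$ and found by exhaustive search when the length is at most $10$.

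Your local target, that each gadget costs at most $\frac25$ of its \emph{length}, is much more demanding than necessary. The paper's gadgets need only cost $\frac25$ of their value, which allows up to $\frac45$ of their length. Even if your target were met, it would still give no handle on the endpoint, which is why the non-closure step in your plan has nothing to stand on.
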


\begin{remark}
    The constants $2/5$ and $5/2$ appearing in Theorem~\ref{t:main-divisibility} can almost certainly be improved.
    They come from the particular local constructions used in Lemma~\ref{l:long-terminal},
    together with the finite verification in Lemma~\ref{l:finite-gadgets}.
    A more extensive search, and a refinement of the local gadgets allowed in
    Lemma~\ref{l:long-terminal}, 
    would likely improve these constants.
    For this reason we have not tried to optimize the additive $O(1)$ terms in the statements,
    although the proof is completely effective and explicit
    constants could be extracted from it.
    On the other hand, improving the lower bound on $\alpha(n)$ to a superlinear one would require genuinely new ideas,
    rather than merely a more careful optimization of the constructions used here.
\end{remark}

Theorem~\ref{t:main-divisibility} will be a simple consequence of the corresponding statement for labelled permutation digraphs, 
which we now introduce. For a letter $s\in\cA$, we define its label to be the underlying generator in $\{a,b\}$
(i.e.\ $a$, $a^{-1}$ have label $a$, and $b$, $b^{-1}$ have label $b$).

\begin{definition}
    A \textit{labelled permutation digraph} $D$ consists of a finite vertex set and, for each $C\in\{a,b\}$, exactly one outgoing and one incoming directed edge labelled $C$ at every vertex. 
    Thus the $C$-edges form the directed cycles of a permutation of the vertex set (see Figure \ref{f:lab-perm-digraph}).
    Loops are allowed.
\end{definition}

\begin{figure}[ht]
    \centering
    \begin{tikzpicture}[->,>=stealth',auto,node distance=3cm,thick,
    	main node/.style={circle,draw,minimum size=6mm}]
    	
    	\node[main node] (1) at (90:3cm) {1};
    	\node[main node] (2) at (141:3cm) {2};
    	\node[main node] (3) at (193:3cm) {3};
    	\node[main node] (4) at (244:3cm) {4};
    	\node[main node] (5) at (296:3cm) {5};
    	\node[main node] (6) at (347:3cm) {6};
    	\node[main node] (7) at (399:3cm) {7};
    
    	\draw (1) to (2);
    	\draw (2) to (4);
    	\draw (4) to (1);
    	\path (3) edge[loop left] ();
    	\path (6) edge[loop right] ();
    	\draw (5) to (7);
    	\draw (7) to[bend left=18] (5);
    
    	\draw[dashed] (1) to (7);
    	\draw[dashed] (7) to (4);
    	\draw[dashed] (4) to (3);
    	\draw[dashed] (3) to (1);
    	\draw[dashed] (2) to[bend left=18] (6);
    	\draw[dashed] (6) to[bend left=18] (2);    
    	\path[dashed] (5) edge[loop below] ();
    \end{tikzpicture}
    \caption{The labelled permutation digraph on seven vertices corresponding to
    $(1\,2\,4)\,(3)\,(6)\,(5\,7)$ (solid edges) 
    and
    $(1\,7\,4\,3)\,(2\,6)\,(5)$ (dashed edges). Dashed and solid edges correspond to two possible labels $C \in \{a,b\}$.}
    \label{f:lab-perm-digraph}
\end{figure}

Given a word $w=s_1\dots s_{\ell}$, a labelled digraph $D$, and a vertex $v$, the \textit{$(w,v)$-walk} is the walk that starts at $v$ and successively reads the letters $s_1,\ldots,s_{\ell}$:
if $s_i=C$, it follows the outgoing $C$-edge, while if $s_i=C^{-1}$, it follows the incoming $C$-edge, where $C\in \{a,b\}$.
When the starting vertex is clear or irrelevant, we simply call it a \textit{$w$-walk}.

Theorem~\ref{t:main-divisibility} will follow easily from the following statement, which is formulated in terms of labelled permutation digraphs.

\begin{theorem}
    \label{t:detector}
    For every non-empty reduced word $w$, there exists a non-closed $w$-walk on a labelled permutation digraph with at most $2|w|/5+O(1)$ vertices.
\end{theorem}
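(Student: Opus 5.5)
We build the digraph by embedding a partial walk, i.e. we construct a \emph{partial} labelled digraph (each vertex has at most one outgoing and at most one incoming $C$-edge for each label $C$) in which the $w$-walk is realized and ends at a vertex different from its start. Such a partial permutation digraph always completes to a labelled permutation digraph on the same vertex set: each partial permutation extends to a permutation. So it suffices to trace $w$ along a partial injection structure using at most $2|w|/5+O(1)$ vertices and end elsewhere.

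\textbf{Baseline.} Recall Buskin's bound: create fresh vertices along the walk, but reuse edges whenever forced. This gives roughly $|w|/2$ vertices. The standard trick is to let a maximal power $C^k$ close into a cycle of length less than $k$ (e.g.\ a loop when $C$ is followed by the other label). Each cycle costs at most one new vertex per \emph{alternation} between labels, and alternations are at most half the letters when runs are long. The improvement to $2/5$ must come from reusing vertices across different syllables, not just within a power.

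\textbf{Plan.} Decompose $w$ into syllables $a^{e_1}b^{f_1}a^{e_2}\cdots$. Process them left to right, maintaining the invariant that the walk so far is a path (or a path with loops) whose current endpoint is \emph{fresh}, i.e.\ has free slots for both labels. Then classify local patterns:
\begin{itemize}
\item Syllables with $|e_i|\geq 2$ cost few vertices relative to their length, for instance by closing $a^{e_i}$ into a short cycle or a loop.
\item Short alternating stretches such as $(ab)^m$ or $ab^{-1}$ are the expensive case. Here I would design \emph{gadgets}: small partial digraphs in which a block of $5$ letters is traced using only $2$ new vertices, by routing back through an already-built vertex whose relevant slot is still free (for example, closing an $a$-edge back to a vertex two steps earlier).
\end{itemize}
This is where the statement's reference to Lemma~\ref{l:long-terminal} and Lemma~\ref{l:finite-gadgets} comes in. I would expect one lemma handling long words by repeatedly applying local gadgets to successive windows, with the window types checked by a finite computer verification. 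That verification shows every reduced word of some fixed length $L$ admits an embedding with at most $2L/5$ new vertices that leaves the endpoint fresh for the next window.

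\textbf{Final step.} Ensure the walk is non-closed. Reserve $O(1)$ extra vertices at the end: if the greedy construction accidentally returns to the start, modify the last few steps (or prepend a detour) to redirect to a new vertex.

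\textbf{Main obstacle.} The hardest part is the gadget lemma: guaranteeing that reuse across windows never creates a conflict, meaning two different outgoing $C$-edges at a vertex, while still saving $1/10$ of a vertex per letter. Fresh-endpoint invariants keep windows independent, so I would try first to prove it by exhaustive search over windows of bounded length and an amortization over the boundary.
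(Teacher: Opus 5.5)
There is a genuine gap. Everything of substance is deferred to a gadget lemma that you do not prove, and under the invariant you impose that lemma is false.

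You require the whole $w$-walk to be traced in the partial digraph, with the endpoint fresh (an unused slot for the next letter) after every window of bounded length $L$. Take $w=(ab)^m$ with walk $p_0,\dots,p_{|w|}$. Suppose $p_i=p_j$ for some $i<j$ with $i\equiv j\pmod 2$. A partial permutation digraph has at most one outgoing edge of each label at each vertex, so $p_{i+t}=p_{j+t}$ for all $t\geq 0$ with $j+t\leq |w|$. Hence at any window boundary $B\geq j$ we get $p_B=p_{B-(j-i)}$, whose outgoing edge labelled $s_{B+1}=s_{B-(j-i)+1}$ was already traversed. This contradicts freshness. So the pairs $(p_i,\, i\bmod 2)$ are distinct up to the last boundary, and you need at least $(|w|-L)/2$ vertices. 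That is no gain over Buskin on exactly the alternating words you single out as the hard case. The ``path with loops'' invariant is even more restrictive, and it is inconsistent with routing back through old vertices. Relatedly, your baseline only gives $|w|/2$ when runs are long. Redirecting ``the last few steps'' at the end is also not legitimate: in a permutation digraph an edge may be traversed many times, so altering it changes the walk globally.

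The missing idea, which is already behind Buskin's bound, is that you never need to control the whole walk. The paper builds a good triple only for a prefix $w'$ and measures it by the value $\Phi=|w'|+\eta$, where $\eta$ counts edges traversed exactly once. $\Phi$ is monotone under extension (Lemma~\ref{l:monotonicity}). So once $\Phi(D,w',v)>|w|$, the full $w$-walk on the completed digraph still traverses some edge exactly once, even though its suffix runs uncontrolled over existing edges. Lemma~\ref{l:edge-splitting} then converts this into a non-closed walk at the price of one extra vertex; this is the rigorous form of your final step.

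The target is therefore $\Phi/\mathrm{cost}\geq 5/2$, which is much weaker than five letters per two vertices. The naive path already has $\Phi/\mathrm{cost}=2$ with about one letter per vertex. The paper meets the target with three ingredients:
\begin{enumerate}
\item additive gluing of good triples (Lemma~\ref{l:gluing});
\item least-non-divisor cycles for long terminal powers;
\item a finite computer search for short chunks $w'C^{\varepsilon d}$ with $|w'|\leq 4$.
\end{enumerate}
Taking the shortest prefix with $\Phi>|w|$ gives $\Phi\leq |w|+O(1)$, hence $2|w|/5+O(1)$ vertices. If no such prefix exists, the triple for $w$ itself is already non-closed.

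Note that glued walks also leave every junction along a previously unused edge, which is all the argument above used. So on $(ab)^m$ the paper's prefixes likewise cost at least half a vertex per letter. The saving comes entirely from stopping at a prefix and letting $\eta$ pay for the rest of the word.
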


Although our general strategy is close in spirit to Buskin's approach, 
especially in some of the manipulations used to construct labelled permutation digraphs, 
we believe that our proof contains a few new ingredients which result in a better bound.
We give a short overview of these ideas.
For a word segment we construct a non-closed walk and record two quantities: the number of vertices used (called the cost), and the word length plus the number of edges traversed exactly once (called the value).
The latter quantity is monotone under extension of the word. 
Carefully chosen segments have value-to-cost ratio at least $5/2$, 
and a gluing operation makes these two quantities additive while preserving the goodness of the walk. 
Long powers are handled by cycles whose lengths are least non-divisors of their exponents.
The remaining bounded collection of short terminal-power words is settled by exhaustive verification.
Further intuition for these constructions is given below, in the remarks and discussion between the proofs.

\section{Constructions of labelled permutation digraphs}

Let $\eta(D,w,v)$ be the number of directed labelled edges traversed exactly once by the $(w,v)$-walk on $D$, and define its \emph{value} as
\[
    \Phi(D,w,v)
    \vcentcolon=
    |w|+\eta(D,w,v).
\]

We seek to construct non-closed walks by concatenating short walks into longer ones. In order to carry out this construction, we have to strengthen the condition of being non-closed. This is made precise by the following definition.

\begin{definition}
    Suppose that the $(w,v)$-walk on $D$ ends at $u$.
    The triple $(D,w,v)$ is \emph{good} if
    \begin{enumerate}[label=(\roman*)]
        \item $u\neq v$;
        \item all traversed edges incident with $v$ have the same label;
        \item all traversed edges incident with $u$ have the same label.
    \end{enumerate}
    For a good triple $(D,w,v)$,
    we define its \emph{cost} and \emph{profile} as
    \[
        \cost(D,w,v) \vcentcolon= |D|-1,
        \qquad
        P(D,w,v)
        \vcentcolon=
        \bigl(\cost(D,w,v),\Phi(D,w,v)\bigr),
    \]
    and its \emph{efficiency} as\footnote{Note that the cost of a good triple is always strictly positive.}
    \[
      \rho(D,w,v)
      \vcentcolon=
      \frac{\Phi(D,w,v)}{\cost(D,w,v)}.
    \]
\end{definition}

Our first lemma describes the gluing operation for good triples. In fact, the notion of goodness was designed so that the gluing operation can be successfully carried out. In the proof, it will be useful to identify permutations with the corresponding directed graphs. Given a permutation $\sigma$ of a finite set $X$ and an element $x \in X$, by ``$\sigma$ with $x$ deleted'' we will mean the directed graph obtained by removing the edges $(\sigma^{-1}(x),x)$, $(x,\sigma(x))$, and adding the edge $(\sigma^{-1}(x),\sigma(x))$ unless $\sigma(x) = x$. Thus, $x$ is an isolated vertex in this graph, and its removal gives a permutation of $X\setminus\{x\}$.

\begin{lemma}[Gluing]
    \label{l:gluing}
    Let $w_1,\ldots,w_k$ be non-empty reduced words such that, for each $i<k$, the label of the last letter of $w_i$ differs from that of the first letter of $w_{i+1}$. Let $w = w_1\dots w_k$ be their concatenation, and suppose $(D_i,w_i,v_i)$ is a good triple for each $i$.
    Then there is a good triple $(D,w,v)$ satisfying
    \[
        P(D,w,v)
        =
        \sum_{i=1}^k P(D_i,w_i,v_i).
    \]
\end{lemma}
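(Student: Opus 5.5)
By induction on $k$ it suffices to treat $k=2$. Given good triples $(D_1,w_1,v_1)$ and $(D_2,w_2,v_2)$ with walk endpoints $u_1$ and $u_2$, the plan is to identify $u_1$ with $v_2$. Then the $w_1$-walk followed by the $w_2$-walk becomes a single $(w_1w_2,v_1)$-walk on a digraph with $|D_1|+|D_2|-1$ vertices. This gives
\[
\cost = (|D_1|+|D_2|-1)-1 = \cost(D_1,w_1,v_1)+\cost(D_2,w_2,v_2).
\]
For the induction to go through, the result must again be a good triple whose first and last letters are those of $w_1$ and $w_2$. That keeps the label hypothesis available at the next gluing step.

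Identifying two vertices naively breaks the permutation structure, since the merged vertex would carry two outgoing edges of each label. I will use goodness to repair this. Let $C$ be the label of the last letter of $w_1$, and $C'\ne C$ the label of the first letter of $w_2$. By condition (iii) for $D_1$, every traversed edge at $u_1$ has label $C$. By condition (ii) for $D_2$, every traversed edge at $v_2$ has label $C'$.

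The merged vertex $x$ is built as follows.
\begin{itemize}
\item For label $C$, take the $C$-permutation of $D_2$ with $v_2$ deleted and the $C$-permutation of $D_1$ intact, with $u_1$ renamed $x$. The $C$-edges at $v_2$ are untraversed, so deleting $v_2$ only shortcuts an unused pair of edges into one edge $(\sigma^{-1}(v_2),\sigma(v_2))$. That new edge is also untraversed.
\item For label $C'$, do the symmetric operation: delete $u_1$ from the $C'$-permutation of $D_1$ and keep that of $D_2$.
\end{itemize}
Each label class is now a genuine permutation of the glued vertex set. Every edge traversed by either walk survives with its endpoints unchanged, except that $u_1$ and $v_2$ both become $x$. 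So the $w_1$-walk from $v_1$ ends at $x$, and reading $w_2$ from $x$ reproduces the $w_2$-walk from $v_2$.

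It remains to check that the profile adds and goodness survives. The traversed edge sets of the two walks lie in disjoint parts of the new digraph. The only shared vertex is $x$, and at $x$ the two walks use different labels, so no edge is used by both walks. Hence
\[
\eta(D,w,v)=\eta(D_1,w_1,v_1)+\eta(D_2,w_2,v_2).
\]
Together with $|w|=|w_1|+|w_2|$, this gives additivity of $\Phi$.

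For goodness, the start vertex is $v_1$ and the end vertex is $u_2$. The danger is that $v_1=u_1$ or $v_2=u_2$ might collapse the two endpoints, but condition (i) excludes both. Then $v_1\ne u_2$, because they lie in different original pieces, neither of them being the identified vertex. The traversed edges at $v_1$ are exactly those of the first walk, and those at $u_2$ exactly those of the second. So conditions (ii) and (iii) are inherited.

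The step I expect to need the most care is the deletion construction at $x$ when loops or short cycles occur, for example when $\sigma(v_2)=v_2$ or when the deleted vertex's neighbours coincide. I also need to confirm there that no newly created shortcut edge coincides with a traversed edge. The key point is that the shortcut edges are fresh and untraversed, and I will verify this case by case.
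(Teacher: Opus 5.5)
Your proposal is correct and is essentially the paper's proof. You reduce to $k=2$, identify $u_1$ with $v_2$, keep the full $C$-permutation of $D_1$ and the full $C'$-permutation of $D_2$, delete the identified vertex from the other two permutations, and use goodness to see that no traversed edge is disturbed. The loop and short-cycle cases you flag need no separate case analysis: the shortcut edge only replaces the two untraversed edges at the deleted vertex, so every step of each original walk remains available unchanged.
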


\begin{proof}
    By induction, it is enough to glue two good triples, i.e.\ we may assume that $k = 2$.
    Let the $(w_1,v_1)$-walk end at $u_1$, and suppose without loss of generality that the last label of $w_1$ is $b$ and the first label of $w_2$ is $a$.

    Take disjoint copies of $D_1$ and $D_2$ and identify $u_1$ with $v_2$. On the resulting vertex set, define the $a$-permutation by using the $a$-permutation of $D_1$ with $u_1$ deleted,
    together with the full $a$-permutation of $D_2$. 
    Define the $b$-permutation symmetrically:
    retain the full $b$-permutation of $D_1$ and use the $b$-permutation of $D_2$ with $v_2$ deleted. Let $D$ be the resulting labelled permutation digraph and set $v = v_1$. 
    The goodness conditions ensure that the $(w_1,v_1)$-walk on $D_1$ never uses an $a$-edge incident with $u_1$, and the $(w_2,v_2)$-walk on $D_2$ never uses a $b$-edge incident with $v_2$.
    Hence, if we view them as walks on $D$, these two walks remain unchanged and the $(w,v)$-walk is precisely their concatenation.
    
    It is now clear that $\eta(D,w,v) = \eta(D_1,w_1,v_1) + \eta(D_2,w_2,v_2)$, and also that $|D|-1=(|D_1|-1)+(|D_2|-1)$.
    Moreover, the initial and terminal goodness properties of $(D,w,v)$ follow from those of the two original triples. Thus, the proof of the lemma is complete.
\end{proof}

The following lemma highlights the significance of edges that are traversed once and is a key ingredient in our approach.

\begin{lemma}[Edge splitting]
    \label{l:edge-splitting}
    Let $w$ be a non-empty reduced word, $D$ a labelled permutation digraph and $v$ a vertex of $D$. 
    If the $(w,v)$-walk traverses some edge exactly once, then either that walk is non-closed, 
    or there is a labelled permutation digraph on $|D|+1$ vertices with a non-closed $w$-walk.
\end{lemma}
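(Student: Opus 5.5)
Assume the $(w,v)$-walk on $D$ is closed and traverses some edge $e=(x,y)$, say with label $C$, exactly once. We add one new vertex $z$ and subdivide $e$ using the $C$-edges $(x,z)$ and $(z,y)$ in place of $(x,y)$. All other $C$-edges stay as they are. For the other label $C'\ne C$ we add a loop at $z$. The result $D'$ is again a labelled permutation digraph, now on $|D|+1$ vertices: the $C$-permutation is that of $D$ with $z$ inserted right after $x$ in the cycle through $x$, and the $C'$-permutation is that of $D$ with $z$ fixed.

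We then follow the $(w,v)$-walk in $D'$. Up to the unique step where the original walk crosses $e$, the two walks agree, because the original walk uses no other edge incident with $z$ and never uses $e$ before that step. At that step the letter is $C$, if $e$ is traversed forwards from $x$, or $C^{-1}$, if it is traversed backwards from $y$. In $D'$ the walk therefore arrives at $z$ instead of $y$, respectively at $z$ instead of $x$. After this point we use reducedness: the next letter is not the inverse of this one. So it is either the same letter, which moves along the subdivided edge to $y$ (resp.\ $x$), or a letter with label $C'$, which loops at $z$.

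The plan is to compare the two walks step by step and show they cannot both close up. A single instance of the substitution ``$C$ becomes $C\cdot$(stay at $z$)'' is awkward to track directly. The cleaner route is to treat $D'$ as $D$ with one extra edge-crossing made into a two-step path, and to check the final vertex in each case.

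If the original walk crosses $e$ forwards and the next letter is $C$ again, the new walk reaches $y$ one letter late, and the two walks fall out of sync. To handle this, I would instead pick the edge to split as the \emph{last} edge traversed exactly once along the walk, or track a potential-type invariant.

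The main obstacle is exactly this bookkeeping after the split. One way around it is to see the new walk as a walk in $D$ for a modified word, but that seems messy.

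A more robust alternative is to argue on the $C$-cycle structure. Write the $C$-permutation of $D'$ as $\sigma'=\sigma\circ\tau$, where $\tau$ is the transposition-like insertion, and look at the end vertex. Since $e$ is crossed only once, the walk's image in $D'$, after collapsing $z$ onto $y$, is the original closed walk, and the only possible discrepancy is whether the walk ends at $z$ or at its collapse image. If the new walk were closed at $v\ne z$, it would reach $v$ through edges that also exist in $D$. I would then argue that the net number of times it crosses the cut separating $z$ would be forced to equal one. This is because the two edges at $z$ labelled $C$ are each used once in a single direction and the loop contributes nothing, which contradicts closure by a parity or cut argument.

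Making this cut argument precise is the step I expect to be hardest.
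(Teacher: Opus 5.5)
\textbf{There is a genuine gap.} Your construction of $D'$ is exactly the right one: subdivide $e=(x,y)$ by a new vertex $z$ and put a loop of the other label at $z$. But the proposal never proves that the $(w,v)$-walk on $D'$ is non-closed, and that is the entire content of the lemma.

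\textbf{Why your routes do not work.} The forward, step-by-step comparison breaks down as soon as the walk enters $z$. If the next letter has label $C'$, the $D'$-walk stays at $z$ while the $D$-walk moves away from $y$. From then on the two walks sit at unrelated vertices, and the $D'$-walk may return to $z$ any number of times. Choosing the \emph{last} once-traversed edge does not change this.

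The cut argument you sketch rests on the claim that the two $C$-edges at $z$ are each used exactly once in $D'$. That claim is unjustified for the same reason: nothing stops the $D'$-walk from coming back to $z$. Moreover, no flow or parity count can give a contradiction here. The original walk in $D$ is itself closed and crosses $e$ exactly once, so crossing an edge once is perfectly compatible with closedness. Any contradiction must use the fact that each label defines a permutation. Note also that reducedness plays no role in this lemma.

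\textbf{The missing idea: compare from both ends.} Write $w=pC^{\varepsilon}r$, where the displayed letter is the unique step crossing $e$; say $\varepsilon=1$.
\begin{enumerate}
\item Each label acts as a permutation, so walks are reversible. Hence the $(w,v)$-walk on $D'$ is closed if and only if the $(pC,v)$-walk and the $(r^{-1},v)$-walk on $D'$ end at the same vertex.
\item On $D$, the $(p,v)$-walk does not use $e$ and ends at $x$.
\item Because the original walk is closed, the $(r^{-1},v)$-walk on $D$ retraces the $r$-segment backwards. So it also avoids $e$, and it ends at $y$. This is the only place the closedness hypothesis is needed.
\item A walk on $D$ that starts at an old vertex and never reads $C$ at $x$ or $C^{-1}$ at $y$ is unchanged in $D'$.
\item Therefore, on $D'$, the $(pC,v)$-walk ends at $z$ and the $(r^{-1},v)$-walk ends at $y\neq z$. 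So the $w$-walk on $D'$ is not closed.
\end{enumerate}
This avoids all bookkeeping after the split.
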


\begin{proof}
    Assume the walk is closed and let $e$ be an edge which is traversed exactly once, say the directed $C$-edge $(x,y)$. 
    Add a new vertex $z$ and replace the $C$-edge $(x,y)$ with $C$-edges $(x,z)$ and $(z,y)$.
    Additionally, give $z$ a loop labelled with the other generator. Let $D'$ be the labelled permutation digraph obtained by making these modifications.
    
    Write $w=pC^{\varepsilon}r$ for some $\varepsilon\in\{-1,1\}$, where the displayed occurrence of the label $C$ corresponds to the unique step that traverses $e$. Assume that $\varepsilon = 1$; the case $\varepsilon = -1$ is similar. The key observation is that the $(p,v)$-walk and $(r^{-1},v)$-walk on $D$ do not use the edge $e$ and hence coincide with the corresponding walks on $D'$. But this means that on $D'$, the $(pC,v)$-walk ends at $z$, whereas the $(r^{-1},v)$-walk ends at $y$. Since these two vertices are different, the $(w,v)$-walk on $D'$ is not closed, as desired.
\end{proof}

We next describe a simple way to construct good triples with a reasonably small labelled permutation digraph.

\begin{lemma}[Naive construction]
    \label{l:naive}
    For every non-empty reduced word $w$, 
    there is a good triple with profile $(|w|,2|w|)$.
\end{lemma}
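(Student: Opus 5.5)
The natural candidate is the path digraph that simply traces $w$. Write $w=s_1\dots s_\ell$ with $\ell=|w|$, and take vertices $x_0,x_1,\dots,x_\ell$. For each $i$, put a directed edge with the label of $s_i$: from $x_{i-1}$ to $x_i$ if $s_i$ is a positive letter, and from $x_i$ to $x_{i-1}$ if $s_i$ is an inverse letter. Then $|D|=\ell+1$, so the cost is $\ell$.

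The first step is to check that these edges are partial permutations for each label, so that they can be completed to a labelled permutation digraph. Consider a vertex $x_i$. It meets at most two path edges, namely those coming from $s_i$ and $s_{i+1}$. If both carry the same label $C$, then because $w$ is reduced, $s_{i+1}\neq s_i^{-1}$, so $s_i=s_{i+1}$. In that case one edge enters $x_i$ and the other leaves it. Hence, for each label $C$, the $C$-edges form a disjoint union of directed paths; there are no cycles, since the underlying graph is a path. We complete each of the two partial permutations to a full permutation of $\{x_0,\dots,x_\ell\}$ by closing every maximal directed $C$-path into a cycle: add an edge from its terminal vertex back to its initial vertex, which is a loop for a path of length zero. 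This introduces no new vertices.

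Next we verify that the $(w,x_0)$-walk runs along the path and ends at $u=x_\ell\neq x_0$. It uses each of the $\ell$ path edges exactly once, and these edges are pairwise distinct because they join distinct consecutive pairs. So $\eta\geq \ell$. The added closing edges are never traversed, and each path edge is traversed exactly once, so in fact $\eta=\ell$ and $\Phi=2\ell$. For goodness, note that the only traversed edge incident with $x_0$ is the one coming from $s_1$, and the only traversed edge incident with $x_\ell$ is the one coming from $s_\ell$. Conditions (ii) and (iii) therefore hold trivially, and (i) holds because $x_0\neq x_\ell$. The profile is $(\ell,2\ell)$.

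I do not expect a serious obstacle. The only step needing care is the claim that closing the paths does not alter the walk. This holds because the walk is determined step by step along the existing path edges, and a permutation extension keeps those edges. One also has to make sure that a closing edge does not coincide with a path edge in a way that changes the count $\eta$. This cannot happen for a path of length at least one, since the closing edge runs in the reverse direction of the path. The edge case to check is a path of length one for a label $C$, where closing creates a $2$-cycle $x\to y\to x$. Its two edges are distinct directed edges, and the walk uses only one of them.
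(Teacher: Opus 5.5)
Your proof is correct, and it takes a different route from the paper to the same digraph.

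The paper builds only the single-syllable gadget by hand: for $w=C^d$, a $C$-cycle of length $|d|+1$ with loops of the other label at every vertex. For a general $w$ it then splits the word into maximal syllables and invokes Lemma~\ref{l:gluing}. The gluing hypothesis (differing labels at each junction) holds automatically for maximal syllables. Unwinding the gluing means identifying the end of one syllable's cycle with the start of the next, and dropping the two loops at the identified vertex. This gives exactly your path $x_0,\dots,x_\ell$, with each syllable's directed path closed into a cycle and loops everywhere else.

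You instead describe this digraph in one step and check everything directly:
\begin{itemize}
\item Reducedness forces consecutive same-label letters to be equal, so the path edges form partial permutations.
\item The walk runs along the path, using each of the $\ell$ distinct path edges once, so $\eta=\ell$.
\item Goodness at $x_0$ and $x_\ell$ is immediate, since each endpoint meets only one traversed edge.
\end{itemize}

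The paper's route gains brevity and modularity. Given Lemma~\ref{l:gluing}, the proof is two lines, and it follows the same template used later in Lemma~\ref{l:long-terminal} and the main proof, where the syllable gadget is swapped for the more efficient one from Lemma~\ref{l:nondivisor}. Your route gains a self-contained argument that does not depend on the gluing lemma, and it makes explicit that the naive digraph is just the completed ``path of $w$''.
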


\begin{proof}
    Consider first $w$ of the form $C^d$ for some $C \in \{a,b\}$ and $d\neq 0$. 
    Use a directed $C$-cycle of length $|d|+1$, 
    attach a loop of the other label at each vertex,
    and start at any vertex. 
    The walk is non-closed, traverses $|d|$ distinct $C$-edges exactly once, and no edges labelled with the other generator. Hence, the resulting triple is good, and its profile is $(|d|,2|d|)$. 
    For a general word $w$, split it into maximal powers of $a$ and $b$ and apply Lemma~\ref{l:gluing}.
\end{proof}

Finally, the following elementary observation is the reason for adding $\eta$ to the word length in the definition of the quantity $\Phi$.

\begin{lemma}[Prefix monotonicity]
    \label{l:monotonicity}
    Let $w'$ be a prefix of the word $w$. Then $\Phi(D,w',v) \leq \Phi(D,w,v)$. In particular, if $\Phi(D,w',v) > |w|$, then $\eta(D,w,v) \geq 1$.
\end{lemma}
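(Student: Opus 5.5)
The plan is to compare the two quantities step by step along the walk. Write $w = s_1\dots s_\ell$ and let $w' = s_1\dots s_m$ with $m\le \ell$. The $(w',v)$-walk is exactly the first $m$ steps of the $(w,v)$-walk. Each step traverses one labelled directed edge; for step $i$ call it $e_i$. For $0\le j\le \ell$ put
\[
\Phi_j \vcentcolon= j + \#\{e : e \text{ is traversed exactly once among } e_1,\dots,e_j\},
\]
so that $\Phi_m=\Phi(D,w',v)$ and $\Phi_\ell=\Phi(D,w,v)$. It suffices to show that $\Phi_{j+1}\ge \Phi_j$ for every $j$, and the monotonicity then follows by induction.

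When we pass from $j$ to $j+1$, the length term increases by exactly $1$. The count of edges traversed exactly once changes only through the edge $e_{j+1}$, and there are three cases.
\begin{enumerate}[label=(\alph*)]
    \item If $e_{j+1}$ was not traversed before, the count increases by $1$.
    \item If $e_{j+1}$ was traversed exactly once before, the count decreases by $1$.
    \item If $e_{j+1}$ was traversed at least twice before, the count is unchanged.
\end{enumerate}
In every case the net change of $\Phi$ is $+2$, $0$ or $+1$. In particular it is nonnegative, so $\Phi_{j+1}\ge\Phi_j$ and hence $\Phi(D,w',v)\le\Phi(D,w,v)$.

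For the ``in particular'' part, combine the two inequalities:
\[
|w| < \Phi(D,w',v) \le \Phi(D,w,v) = |w| + \eta(D,w,v).
\]
This forces $\eta(D,w,v)>0$, that is, $\eta(D,w,v)\ge 1$.

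No step here is a real obstacle. The one point to state carefully is that traversing an edge backwards, when reading $C^{-1}$, still counts as a traversal of the same directed edge. The three-case analysis above does not depend on the direction of traversal, so it covers this case as well.
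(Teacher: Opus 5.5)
Your proposal is correct and follows the paper's proof: you reduce to one-letter extensions and use the same three-case analysis ($\eta$ changes by $+1$, $-1$ or $0$ while the length grows by $1$). You then deduce $\eta(D,w,v)\geq 1$ from $|w|<\Phi(D,w',v)\leq\Phi(D,w,v)=|w|+\eta(D,w,v)$, which is the same as the paper's observation that $\eta=0$ forces $\Phi=|w|$.
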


\begin{proof}
    For the first statement, it suffices to consider the case when $|w'| = |w|-1$. The last step of the $(w,v)$-walk traverses either a previously unused edge, or an edge previously traversed exactly once, 
    or an edge previously traversed at least twice.
    Accordingly, when passing from $w'$ to $w$, $\eta$ changes by $1,-1$, or $0$. 
    Since the word length increases by one, the desired conclusion follows. The second statement follows from the first since if $\eta(D,w,v) = 0$, then $\Phi(D,w,v) = |w|$.
\end{proof}

Let us now reinterpret the proof of Buskin's $|w|/2+3$ bound for Theorem \ref{t:detector} in the new language introduced in this section\footnote{Strictly speaking, Buskin obtains the slightly better bound $|w|/2+2$, but this is not important for the present discussion.}.
Given a word $w$ of length $\ell$, our task is to produce a labelled permutation digraph on at most $\ell/2+3$ vertices with a non-closed $w$-walk.
Let $w'$ be the prefix of $w$ of length $\ell' = \lfloor\ell/2\rfloor+1$. 
By Lemma \ref{l:naive}, there is a good triple $(D,w',v)$ such that $P(D,w',v) = (\ell',2\ell')$. In particular, since $2\ell' > \ell$, by Lemma \ref{l:monotonicity} we have $\eta(D,w,v) \geq 1$. Hence, by Lemma \ref{l:edge-splitting}, either the $(w,v)$-walk on $D$ is not closed or there is a labelled digraph on $\ell'+2$ vertices with a non-closed $w$-walk, as desired.

To improve on this, the idea is to establish a version of Lemma \ref{l:naive} which gives better results in certain special cases. 
To this end, observe that the construction in Lemma \ref{l:naive} is rather wasteful when $w$ contains many long segments consisting of the same letter. 
Indeed, if $w = a^d$ for some $d \in \N$, 
we could have equally well taken an $a$-cycle of length $\nd(d)$ (see \eqref{eq:nd} for a definition) with a $b$-loop attached to each vertex. 
Note that we trivially have $\nd(d) \leq d+1$, so this cannot perform worse than the construction in Lemma \ref{l:naive}.
Moreover, by the prime number theorem, $\nd(d) \ll \log d$, so if $d$ is large, we obtain an enormous saving in the number of vertices of $D$.
In fact, it is not hard to see that in this way we get $\rho(D,w,v) > 2$ as soon as $d > 2$. 
However, this observation is not sufficient on its own, as if $d \in \{1,2\}$, then $\nd(d) = d+1$, so the construction based on the least non-divisor does not yield any improvement whatsoever. In the next sections, we explain how to improve the construction further despite this obstacle.

\section{Efficient terminal-power gadgets}

For a positive integer $d$, define its least non-divisor by
\begin{equation}
    \label{eq:nd}
    \nd(d)
    \vcentcolon=
    \min\{q\in\N : q\nmid d\}.
\end{equation}

\begin{lemma}[Least non-divisor construction]
    \label{l:nondivisor}
    Let $d>2$ and $C\in\{a,b\}$. 
    Then the word $w = C^{\pm d}$ has a good triple with profile
    \[
        \bigl(\nd(d)-1,\max\{2\nd(d),d\}\bigr).
    \]
\end{lemma}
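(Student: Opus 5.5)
Take $q=\nd(d)$ and let $D$ be a directed $C$-cycle of length $q$ on vertices $x_0,\dots,x_{q-1}$, with a loop of the other label at each vertex; start at $v=x_0$. Then $|D|=q$, so the cost is $q-1$, which is positive since $q\ge 2$. The plan is to check goodness and then compute $\eta$ by counting how often each edge of the cycle is traversed.

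For goodness, the $(C^{\pm d},x_0)$-walk goes around the cycle and ends at $x_{\pm d \bmod q}$. Since $q\nmid d$, this is not $x_0$, which gives condition (i). The walk uses only $C$-edges, so conditions (ii) and (iii) hold trivially. This is exactly why we choose the least non-divisor: it is the smallest cycle length for which the walk does not close.

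Next we compute $\eta$. Each of the $q$ edges of the cycle is traversed either $\lfloor d/q\rfloor$ or $\lceil d/q\rceil$ times. More precisely, writing $d=mq+r$ with $0<r<q$, exactly $r$ edges are used $m+1$ times and $q-r$ edges are used $m$ times. The loops are never used. We split into two cases.
\begin{itemize}
\item If $d<q$, then $m=0$ and $\eta=d$. But $d<q$ can only happen when $q=d+1$, which forces every integer $1,\dots,d$ to divide $d$. That is impossible for $d>2$, since $d-1\nmid d$ once $d\ge 3$. So this case does not occur.
\item If $d\ge q$, then $m\ge 1$. For $d>2$ we have $q\ge 2$, and in fact $q\ge 3$ when $d$ is even, while $q=2$ when $d$ is odd.
\end{itemize}

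The main obstacle is the claimed value $\max\{2q,d\}+q-1$, which does not appear to match the naive count $\Phi=d+\eta$. Here $\eta=r$ if $m=1$ and $\eta=0$ if $m\ge 2$. So with $m\ge 2$ we get $\Phi=d$, and with $m=1$ we get $\Phi=d+r=2d-q<2q$. Rereading the statement, the second coordinate of the profile is just $\Phi$, so what is needed is $\Phi=\max\{2q,d\}$.

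If $d\ge 2q$, then $m\ge 2$, so $\eta=0$ and $\Phi=d=\max\{2q,d\}$, as required. If $q\le d<2q$, then $m=1$ and $\Phi=2d-q$, and for this to equal $2q$ we would need $d=3q/2$. So I would first check whether $q\le d<2q$ is actually impossible for $d>2$. If $q\le d$ and $q\nmid d$, then $q-1\mid d$ and every number below $q$ divides $d$, so $\mathrm{lcm}(1,\dots,q-1)\mid d$. For $q\ge 4$ this lcm is at least $2q$, giving $d\ge 2q$. The small cases need separate checks: $q=2$ means $d$ is odd, so $d\ge 3$ and $d<4$ forces $d=3$; $q=3$ means $2\mid d$ and $3\nmid d$, and $d<6$ forces $d=4$.

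So the key steps are the lcm bound $\mathrm{lcm}(1,\dots,q-1)\ge 2q$ for $q\ge 4$, which follows from $(q-1)(q-2)\ge 2q$ since consecutive integers are coprime, together with the two exceptional cases $d=3$ and $d=4$. For $d=3$ we have $q=2$, and the edges are used $2$ and $1$ times, so $\eta=1$ and $\Phi=4=2q$. For $d=4$ we have $q=3$, with uses $2,1,1$, so $\eta=2$ and $\Phi=6=2q$. In both cases $\Phi=\max\{2q,d\}$, which completes the argument.
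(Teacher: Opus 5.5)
Your construction is exactly the paper's: a $C$-cycle of length $q=\nd(d)$ with loops of the other label. Your checks of goodness and of the cases $d<q$ and $d\ge 2q$ are fine. The error is in the case $m=1$.

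You correctly state that $r$ edges are used $m+1$ times and $q-r$ edges are used $m$ times. You then set $\eta=r$ when $m=1$. But the edges traversed exactly once are the $q-r$ edges used $m=1$ time, so $\eta=q-r=2q-d$. This gives $\Phi=d+(2q-d)=2q$ directly. Your own hand count for $d=4$ (uses $2,1,1$, hence $\eta=2$) already contradicts your formula, which would give $\eta=r=1$. With the correct count, your ``main obstacle'' disappears and the argument is the paper's. Writing $d=tq+r$, you get $t\ge 1$ because $d-1\nmid d$. Then $t=1$ gives $\Phi=2q$ and $t\ge 2$ gives $\Phi=d$. There is no need to determine which $d$ have $t=1$.

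The detour you built on the wrong formula also has a hole. The inequality $(q-1)(q-2)\ge 2q$ fails at $q=4$, where it reads $6\ge 8$; correspondingly $\mathrm{lcm}(1,2,3)=6<8$. So $d=6$, with $\nd(6)=4$ and $4\le 6<8$, is a case with $m=1$ that your argument says cannot occur. The lemma still holds there, since $\eta=2$ and $\Phi=8=2q$, but your proof as written does not cover it. Patching the exceptional list to $d\in\{3,4,6\}$ would close this gap. The cleaner fix is to replace the whole lcm case analysis with the correct count $\eta=q-r$.
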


\begin{proof}
    Let $q=\nd(d)$.
    Form a labelled permutation digraph $D$ by taking a directed $C$-cycle of length $q$ and attaching loops of the other label; let $v$ be an arbitrary vertex od $D$.
    It is clear the $(w,v)$-walk traverses only edges labelled with $C$, and since $q\nmid d$, it is not closed. 
    Write $d=tq+r$ with $t \in \N_0$ and $1\leq r\leq q-1$. 
    Since $d>2$, we have $q\leq d-1$, so $t\geq1$.
    If $t=1$, then $2q-d=q-r$ edges are traversed exactly once, and hence $\Phi(D,w,v)=2q$. 
    If $t\geq2$, then every edge is traversed at least twice and so $\Phi(D,w,v)=d$.
\end{proof}

Thus, the problematic words are those that consist of short segments of $a$'s and $b$'s. 
To deal with such words, the idea is to break them up into smaller chunks,
for which the above construction can be improved. 
As a rough indication that this may be possible, 
consider a word $w = a^{d_1} b^{e_1}\dots a^{d_k} b^{e_k}$ and suppose there exists $i < k$ such that $d_i + d_{i+1} \neq 0$. 
Then one can essentially merge the cycles corresponding to $a^{d_i}$ and $a^{d_{i+1}}$ into a single cycle, 
and replace the cycle corresponding to $b^{e_i}$ by a single fixed point. 
This reduces the cost and the value by roughly the same amount, which results in a larger efficiency.
It remains to deal with those words such that $d_{i+1} = -d_i$ and $e_{i+1} = -e_i$ for all $i < k$. 
But such words are essentially periodic with period of the form $a^d b^e a^{-d} b^{-e}$, 
and can be dealt with by a separate argument.
Even though these arguments could be carried out analytically, 
it turns out that a computational approach is both cleaner and delivers quantitatively superior results.

\begin{lemma}[Long terminal powers]
    \label{l:long-terminal}
    Let
    \[
        w=w'C^{\varepsilon d},
        \qquad C\in\{a,b\},\quad \varepsilon\in\{-1,1\},
    \]
    be a reduced word, where $w'$ is empty or ends in the other generator, $|w'|\leq4$, and $|w|\geq 5$.
    Then there is a good triple $(D,w,v)$ such that
    \[
        \rho(D,w,v)\geq \frac52
        \qquad\text{and}\qquad
        \eta(D,w,v)\ll 1.
    \]
\end{lemma}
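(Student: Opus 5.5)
Since $|w'|\le 4$, the prefix $w'$ ranges over a finite list of reduced words: the empty word and the words of length at most $4$ ending in the label other than $C$. Up to the symmetries $a\leftrightarrow b$ and inversion of generators, this list is small. The exponent $d\ge 1$ is arbitrary, subject to $|w'|+d\ge 5$. The plan is to split according to the size of $d$, using a threshold $d_0$ to be fixed later.

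\textbf{Large $d$.} Here we use Lemma~\ref{l:gluing} to glue a good triple for $w'$ to the good triple for $C^{\varepsilon d}$ from Lemma~\ref{l:nondivisor}. The gluing hypothesis holds because $w'$ ends in the other generator. For $w'$ we take Lemma~\ref{l:naive}, with profile $(|w'|,2|w'|)$. The resulting profile is
\[
\bigl(|w'|+\nd(d)-1,\; 2|w'|+\max\{2\nd(d),d\}\bigr).
\]
Since $\nd(d)\ll\log d$, the value is at least $d+2|w'|$ while the cost is $O(\log d)$. So the efficiency exceeds $5/2$ once $d\ge d_0$, for an explicit $d_0$. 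For $\eta$:
\begin{itemize}
\item the naive part contributes at most $2|w'|\le 8$;
\item the power part contributes at most $2\nd(d)-d$ in the case $t=1$ (by the proof of Lemma~\ref{l:nondivisor}), and $0$ otherwise.
\end{itemize}
Since $2\nd(d)-d\le 0$ once $d$ is moderately large, $\eta\ll 1$ holds uniformly. Smaller $d$ are left to the next case.

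\textbf{Small $d$.} For $d<d_0$ only finitely many words $w$ remain, each of length at most $4+d_0$. For each one we must exhibit a good triple with $\Phi/(|D|-1)\ge 5/2$, and $\eta$ is then automatically bounded by the finiteness. I would do this by a finite search over labelled permutation digraphs on few vertices: enumerate pairs of permutations on $m$ vertices and a start vertex, and check three things:
\begin{itemize}
\item non-closedness;
\item the two endpoint label conditions in the definition of goodness;
\item $2\Phi\ge 5(m-1)$.
\end{itemize}
Presumably this is the content of Lemma~\ref{l:finite-gadgets}. Where the search fails to directly find a gadget, I would split $w$ as $w_1w_2$ at a label change and glue gadgets for the pieces, since efficiency $\ge 5/2$ is preserved under gluing by additivity of profiles. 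I would also use the merging idea from the discussion before the lemma: share one $C$-cycle between two $a$-blocks and replace a $b$-block by a fixed point. This handles words such as $ab^{\pm1}a^{d}$.

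\textbf{Main obstacle.} The hard part is the small cases with $d\in\{1,2\}$, and more generally short alternating words like $abab\,a$ or $ab a^{-1}b^{-1}a$. There the naive construction gives efficiency only $2$, and a gadget that saves a vertex must be found by hand or by computer. The danger is that some such word admits no good triple of efficiency $5/2$ at all. If so, I would use the freedom allowed by the hypothesis $|w|\ge 5$ together with flexible placement of the start vertex. I would also make sure that $d_0$ is chosen small enough for the computation to be feasible: an explicit estimate such as $\nd(d)\le 2\log_2 d+2$ brings $d_0$ down to a few dozen.
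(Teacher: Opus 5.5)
Your large-$d$ case is exactly the paper's: glue the naive triple for $w'$ to the least-non-divisor triple via Lemma~\ref{l:gluing}. Your control of $\eta$ there is also fine; in fact the naive part contributes exactly $|w'|$ and the power part contributes $\max\{2\nd(d)-d,0\}$.

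The gap is the intermediate range of $d$. You only use $\nd(d)\ll\log d$, or $\nd(d)\le 2\log_2 d+2$, so your threshold comes out near $d_0\approx 30$. Every word with $d<d_0$ is then handed to a computer search that you have not carried out and whose success you yourself doubt. Your fallbacks are not shown to work: splitting at a label change, merging cycles, or moving the start vertex. Splitting in particular can create pieces such as $C^{\pm1}$ or $C^{\pm2}$, which have efficiency at most $2$ by themselves. Lemma~\ref{l:finite-gadgets} only certifies words with $|w|\le 10$. So the words with $|w|\ge 11$ and $7\le d<d_0$ are covered by nothing in your argument.

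The missing ingredient is a sharper bound on the least non-divisor. It shows that the glued construction already has efficiency at least $5/2$ for all $d\ge 7$ and $0\le \ell=|w'|\le 4$.

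\begin{itemize}
\item Let $q=\nd(d)$. If $q\ge 3$, then $q-1$ and $q-2$ are coprime divisors of $d$, so $(q-1)(q-2)\mid d$. The case $q=2$ is trivial.
\item For $d\ge 12$ this forces $q-1\le d/3$. Otherwise $(q-1)(q-2)>\frac{d}{3}\bigl(\frac{d}{3}-1\bigr)\ge d$, which contradicts divisibility.
\item Hence the ratio $\frac{2\ell+\max\{2q,d\}}{\ell+q-1}$ is at least $\frac{2\ell+d}{\ell+d/3}$. This is $\ge \frac52$ exactly when $\ell\le d/3$, which holds because $\ell\le 4\le d/3$.
\item For $7\le d\le 11$ one has $\nd(d)\in\{2,3\}$. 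A direct check gives ratio at least $8/3$, attained at $d=8$, $\ell=4$.
\end{itemize}

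Since $|w'|\le 4$ and $|w|\ge 11$ force $d\ge 7$, the computer is needed only for $|w|\le 10$. That is exactly what Lemma~\ref{l:finite-gadgets} verifies, on digraphs with at most five vertices. With this bound inserted, your argument becomes the paper's proof.
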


\begin{proof}
    For $|w|\leq 10$, this can be verified by a brute-force computer search; see Appendix \ref{s:code}.
    So we assume $|w|\geq 11$, and hence $d\geq 7$.
    Use the naive construction for $w'$ (Lemma \ref{l:naive}) and the least-non-divisor construction (Lemma \ref{l:nondivisor}) for $C^{\varepsilon d}$, and glue them together using Lemma \ref{l:gluing}. The resulting triple has profile
    \[
        \bigl( 
            |w'|+\nd(d)-1,\,2|w'|+\max\{2\nd(d),d\}
        \bigr).
    \]
    It is a straightforward matter to check that the minimum of
    \begin{equation}\label{eq:ratio}
        \frac{2\ell + \max\{2\nd(d), d\}}{\ell+\nd(d)-1}
    \end{equation}
    over all $d \geq 7$ and $0\leq \ell \leq 4$ is $5/2$. Indeed, if $d \geq 12$, then $\frac{d}{3}(\frac{d}{3}-1) \geq d$, so since $(\nd(d)-1)(\nd(d)-2)$ is either $0$ or a divisor of $d$, we must have $\nd(d)-1 \leq \frac{d}{3}$. Thus, the difference between the ratio \eqref{eq:ratio} and $5/2$ is at least
    \[
        \frac{2\ell+d}{\ell+d/3} - \frac{5}{2} = \frac{d/3-\ell}{\ell+d/3} \geq 0,
    \]
    with equality if and only if $(d,\ell) = (12, 4)$. The remaining cases $7 \leq d \leq 11$ can be easily verified by hand. We thus obtain a good triple with efficiency $\rho$ at least $5/2$, and the required control on $\eta$ follows 
    since for sufficiently large $d$ we have $\nd(d) < d/2$.
\end{proof}

\section{Proof of the main theorem}

\begin{proof}[Proof of Theorem \ref{t:detector}]
    Let $w = s_1\ldots s_{\ell}$, where $\ell = |w|$ is the length of $w$. Write $w$ in syllable form
    \[
        w=C_1^{f_1}C_2^{f_2}\dots C_m^{f_m},
    \]
    where $C_i\in\{a,b\}$, consecutive $C_i$ are different, and every $f_i$ is non-zero.
    Greedily partition these syllables into consecutive chunks $c_1,\ldots,c_r$ by repeating the following: starting at the first unused syllable, keep concatenating syllables until the accumulated length is at least $5$; if fewer than $5$ letters remain, take them as the final chunk.
    
    Every chunk except possibly the last has length at least $5$.
    Moreover, in each such chunk the part preceding its final syllable has length at most $4$.
    The same holds for the last chunk whenever its length is at least $5$.
    Thus every prefix of a chunk having length at least $5$ satisfies the hypotheses of Lemma~\ref{l:long-terminal}.
    
    For each chunk prefix $z$, choose a good triple as follows.
    If $1\leq|z|\leq4$, use Lemma~\ref{l:naive} to obtain a good triple with profile $(|z|,2|z|)$.
    If $|z|\geq5$, use Lemma~\ref{l:long-terminal} instead.
    
    For a global prefix $w' = s_1\ldots s_{\ell'}$, write it in the form $w' = c_1\dots c_{j-1}w''$, where $w''$ is a prefix of $c_j$. Consider the chosen good triples for $c_1,\ldots,c_{j-1}$ and $w''$.
    Note that all of these triples have efficiency $\rho$ at least $5/2$,
    except possibly the last one if $|w''| \leq 4$.
    Glue these triples using Lemma~\ref{l:gluing} to obtain a good triple $(D_{\ell'},w',v_{\ell'})$ such that
    \[
        |D_{\ell'}| \leq \frac{2}{5} \Phi(D_{\ell'},w',v_{\ell'}) + O(1).
    \]
    Call $w'$ \emph{suitable} if $\Phi(D_{\ell'},w',v_{\ell'}) > \ell$. 
    
    Suppose first that a suitable prefix $w'$ exists, and consider the one whose length $\ell'$ is minimal.
    By minimality, we then have $\Phi(D_{\ell'-1},s_1\ldots s_{\ell'-1},v_{\ell'-1}) \leq \ell$, and by construction, $\Phi(D_{\ell'},w',v_{\ell'})$ and $\Phi(D_{\ell'-1},s_1\ldots s_{\ell'-1},v_{\ell'-1})$ differ by at most $O(1)$. Hence, we have $\Phi(D_{\ell'},w',v_{\ell'}) \leq \ell + O(1)$.
    Moreover, Lemma \ref{l:monotonicity} implies that $\eta(D_{\ell'},w,v_{\ell'}) \geq 1$ and hence Lemma \ref{l:edge-splitting} implies that
    either the $(w,v_{\ell'})$-walk on $D_{\ell'}$ is not closed or there is a digraph with $|D_{\ell'}|+1$ vertices and a non-closed $w$-walk. 
    We thus obtain a digraph with at most $2\ell/5 + O(1)$ vertices and a non-closed $w$-walk, as desired.
    
    If a suitable prefix $w'$ does not exist, then in particular $w$ is not suitable, meaning that $\Phi(D_{\ell},w,v_{\ell}) \leq \ell$. 
    In this case, it readily follows that $|D_{\ell}| \leq 2\ell/5 + O(1)$ 
    and the $(w,v_{\ell})$-walk on $D_{\ell}$ is not closed, so we are again done.
\end{proof}

\begin{proof}[Proof of Theorem~\ref{t:main-divisibility}]
    Given any non-trivial word $w \in F_2$, apply Theorem~\ref{t:detector} to obtain a labelled permutation digraph $D$ with $n \leq 2|w|/5+O(1)$ vertices and a vertex $v$ such that the $(w,v)$-walk is not closed. By considering the corresponding $a$-permutation and $b$-permutation, we obtain elements $\sigma, \tau \in S_n$ such that $w(\sigma,\tau)$ has a non-fixed point. In particular, it follows that $w$ is not a law for $S_n$, thus proving the theorem.
\end{proof}

\appendix
\section{A verifier for Lemma \ref{l:long-terminal}}
\label{s:code}

The proof of Lemma \ref{l:long-terminal} relies on the following finite check.

\begin{lemma}
    \label{l:finite-gadgets}
    Let $w$ be as in Lemma \ref{l:long-terminal}, and assume in addition that $|w| \leq 10$.
    Then there is a good triple $(D,w,v)$ with
    \[
        \rho(D,w,v)\geq \frac52.
    \]
    Moreover, $D$ may be chosen to have at most five vertices.
\end{lemma}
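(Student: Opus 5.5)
This is a finite statement, and I will prove it by enumerating the cases and exhibiting explicit witnesses, reducing symmetries first to keep the list small.

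\textbf{Enumeration of words.} Every word $w=w'C^{\varepsilon d}$ covered by Lemma~\ref{l:long-terminal} with $5\le|w|\le10$ is determined by:
\begin{itemize}
\item a reduced word $w'$ of length at most $4$ that is empty or ends in the other generator;
\item an exponent $d\ge1$ with $5\le |w'|+d\le 10$.
\end{itemize}

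\textbf{Symmetry reduction.} Three operations preserve goodness, profiles and efficiency:
\begin{itemize}
\item swapping the labels $a$ and $b$;
\item inverting all occurrences of a given generator, which corresponds to reversing the edges of that colour in $D$;
\item replacing $w$ by $w^{-1}$ and starting at the endpoint $u$, which reverses the walk.
\end{itemize}
The third operation does not preserve the shape ``short prefix followed by a long terminal power'', so I will only use it where it keeps us inside the family. With the first two, I normalise so that $C=a$, $\varepsilon=1$, and the first letter of $w'$ (if any) has positive exponent. This leaves a list of a few hundred words.

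\textbf{Search for witnesses.} For each remaining word, I run a search over all labelled permutation digraphs on $N\le5$ vertices, that is, pairs of permutations $(\sigma,\tau)\in S_N^2$, modulo simultaneous conjugation. The start vertex can be fixed to be vertex $1$. For each candidate I:
\begin{enumerate}
\item trace the $(w,1)$-walk, recording the multiset of traversed edges;
\item check conditions (i)--(iii) of goodness;
\item compute $\eta$ and hence $\Phi=|w|+\eta$;
\item test whether $2\Phi\ge5(N-1)$.
\end{enumerate}
Since $N\le5$, there are at most $(5!)^2=14400$ pairs per word, so the search is trivial. The output is, for each word, an explicit witness $(\sigma,\tau)$. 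These witnesses certify the lemma independently of the search, because each one can be checked by hand.

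\textbf{Expected difficulty.} The main risk is not computational; it is whether a witness with at most five vertices exists for every word. The hard cases should be those where $d$ is small and $w'$ is nearly periodic, for example $w=ab a^{-1}b^{-1}a$ or $b^{-1}a^{-1}ba$. For such words the naive construction gives only efficiency $2$, and the least non-divisor construction is unavailable because $d\le2$.

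For these words I would first try merging the $a$-syllables into a single short $a$-cycle and collapsing the intervening $b$-syllables to fixed points, as suggested in the discussion before Lemma~\ref{l:long-terminal}. If that fails, I would use a digraph in which the walk traverses many edges exactly once: cost $2$ with $\Phi\ge5$ suffices. Failing that, cost $4$ with $\Phi\ge10$ suffices, which is achievable when $\eta\ge 10-|w|$.

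If some word genuinely admits no witness on five vertices, the fallback is to enlarge the vertex bound and check the efficiency $\ge5/2$ directly. The ``at most five vertices'' clause would then need adjusting, but Lemma~\ref{l:long-terminal} only uses the efficiency bound together with $\eta\ll1$, and the latter is automatic for bounded $D$ and $|w|\le10$.
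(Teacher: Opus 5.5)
Your plan is the paper's proof. You list all words $w'C^{\varepsilon d}$ with $|w'|\le 4$ and $5\le|w|\le 10$ (there are $1944$), fix the start vertex, run through all of $S_q^2$ for $q=2,\dots,5$, and test the three goodness conditions together with $2\Phi\ge 5(q-1)$.

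The proposal leaves the outcome open, and the paper supplies it. Every word receives a witness, first on $2,3,4,5$ vertices for $12,216,1092,624$ words respectively. So the five-vertex clause holds as stated, and your fallback is never needed. Incidentally, since $\Phi\ge|w|\ge 5$, any good triple on two or three vertices is automatically a witness. The only real constraints are $\eta\ge 8-|w|$ on four vertices and $\eta\ge 10-|w|$ on five.

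One step of your symmetry reduction would, however, fail as written.
\begin{itemize}
\item Once the swap and the inversion of $a$ have been used to reach $C=a$, $\varepsilon=1$, the only freedom left is inverting $b$. That cannot change the sign of an initial $a$-letter.
\item So the normalisation ``first letter of $w'$ positive'' is not always attainable. Imposing it discards every orbit in which $w'$ begins with $a^{-1}$.
\item For instance, nothing in the orbit of $a^{-1}ba^{3}$ survives your normalisation, so the search would never check it. This holds even if you allow $w\mapsto w^{-1}$, which gives $a^{-3}b^{\mp 1}a$.
\end{itemize}
The fix is to normalise the last letter of $w'$, which is always a $b$-letter, to be $b$. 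The group of order $8$ then acts freely away from the pure powers, and you get exactly $246$ representatives: $6$ pure powers and $240$ others.

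There is a similar issue with quotienting pairs by simultaneous conjugation while also fixing the start vertex. This is only valid if you conjugate by permutations fixing that vertex. The paper simply enumerates all pairs with start vertex $0$.

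A minor slip: $b^{-1}a^{-1}ba$ has length $4$, so it is not in the family.
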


\begin{proof}
    This is verified by exhaustive search.
    A labelled permutation digraph on $q$ vertices $0,1,\dots,q-1$ is exactly an ordered pair in $S_q^2$, and the start vertex may be fixed as $0$.
    The verifier in \cite{py-code} generates all $1944$ reduced words satisfying the hypotheses, enumerates every ordered pair of permutations for $q=2,3,4,5$, simulates the corresponding walk, tests the three goodness conditions, and checks whether $2\Phi(D,w,0)\geq5(q-1)$.
    Every word receives a witness.
    The numbers of words first receiving a witness on $2,3,4,5$ vertices are respectively $12, 216, 1092, 624$.
    These sum to $1944$, completing the finite verification.
\end{proof}

{\small
\subsection*{Acknowledgements and AI tool disclosure}
We would like to thank Sean Eberhard for useful discussions.

AB and RM are supported by the Croatian Science Foundation under the project no.\ HRZZ-IP-2022-10-5116 (FANAP) and by the European Union – NextGenerationEU through the National Recovery and Resilience Plan 2021-2026 Institutional grant of University of Zagreb Faculty of Science (IK IA 1.1.3.\ Impact4Math). LM was supported by the Ministry of Science, Technological Development and Innovation of the Republic of Serbia through the Mathematical Institute of the Serbian Academy of Sciences and Arts.

ChatGPT Plus 5.5 was used to assist with 
routine typesetting,
grammar checking,
phrasing,
identifying technical corrections such as tracking constants,
and drafting Python code \cite{py-code} for the finite verification.
Apart from these uses, the text of this paper was human-written.
}

\printbibliography

@article {KozmaThom,
    AUTHOR = {Kozma, Gady and Thom, Andreas},
     TITLE = {Divisibility and laws in finite simple groups},
   JOURNAL = {Math. Ann.},
  FJOURNAL = {Mathematische Annalen},
    VOLUME = {364},
      YEAR = {2016},
    NUMBER = {1-2},
     PAGES = {79--95},
      ISSN = {0025-5831,1432-1807},
   MRCLASS = {20D06 (20B30 20E05 20P05)},
  MRNUMBER = {3451381},
MRREVIEWER = {Timothy\ C.\ Burness},
       DOI = {10.1007/s00208-015-1201-4},
       URL = {https://www.doi.org/10.1007/s00208-015-1201-4},
}

@article {Buskin,
    AUTHOR = {Buskin, Nikolai V.},
     TITLE = {Economical separability in free groups},
   JOURNAL = {Sib. Math. J.},
  FJOURNAL = {Siberian Mathematical Journal},
    VOLUME = {50},
      YEAR = {2009},
    NUMBER = {4},
     PAGES = {603--608},
      ISSN = {0037-4466,1573-9260},
   MRCLASS = {20E05 (20E07 20E26)},
  MRNUMBER = {2583614},
       DOI = {10.1007/s11202-009-0067-7},
       URL = {https://www.doi.org/10.1007/s11202-009-0067-7},
}

@article {HelfgottSeress,
    AUTHOR = {Helfgott, Harald A. and Seress, \'Akos},
     TITLE = {On the diameter of permutation groups},
   JOURNAL = {Ann. of Math. (2)},
  FJOURNAL = {Annals of Mathematics. Second Series},
    VOLUME = {179},
      YEAR = {2014},
    NUMBER = {2},
     PAGES = {611--658},
      ISSN = {0003-486X,1939-8980},
   MRCLASS = {20B30 (20D06 20F05 20F69)},
  MRNUMBER = {3152942},
MRREVIEWER = {Martin\ W.\ Liebeck},
       DOI = {10.4007/annals.2014.179.2.4},
       URL = {https://www.doi.org/10.4007/annals.2014.179.2.4},
}

@misc{py-code,
    AUTHOR = {Beker, Adrian and Mili{\'c}evi{\'c}, Luka and Mrazovi{\'c}, Rudi},
     TITLE = {Verification code for improved lower bounds on laws of symmetric groups},
      YEAR = {2026},
HOWPUBLISHED = {Python code},
       DOI = {10.5281/zenodo.20802552},
}

\end{document}